\documentclass[11pt]{amsart}

\usepackage{amsthm}
\usepackage{amsmath}
\usepackage{amssymb}
\usepackage{color}
\usepackage{graphics, graphicx}

\newtheorem{thm}{Theorem}[section]
\newtheorem{theorem}[thm]{Theorem}

\newtheorem{corollary}[thm]{Corollary}

\newtheorem{hadamard}[thm]{Hadamard Conjecture}

\newtheorem{lemma}[thm]{Lemma}
\newtheorem{proposition}[thm]{Proposition}

\newtheorem{definition}[thm]{Definition}
\theoremstyle{remark}
\newtheorem{remark}[thm]{Remark}

\newtheorem{ex}[thm]{Example}

\newcommand{\ZZ}{\mathbb Z}
\newcommand{\RR}{\mathbb R}

\begin{document}

\title{Real matrices whose columns have equal modulus
coordinates}
\author[Botelho-Andrade, Casazza, Cheng, Tran, Tremain
 ]{Sara Botelho-Andrade, Peter G. Casazza, Desai Cheng, Tin Tran, and Janet Tremain}
\email{sandrade102087@gmail.com, casazzapeter40@gmail.com}
\email{ 
chengdesai@yahoo.com, tinmizzou@gmail.com}
\email{tremainjc@missouri.edu}

\subjclass{42C15}

\begin{abstract}$ $
We study $m \times n$ matrices whose columns are of the form \[\{(a_{1j},\ldots, a_{nj}): 
\quad a_{1j} = 
\lambda_j,\ a_{ij} = \pm\lambda_j\ , \  \lambda_j >0 ,\ j=1,2,\ldots,n\}.\]
 We explicitly construct for all $a = (a_1,\ldots, a_{\frac{m(m-
1)}{2}}) \in \mathbb{R}^{\frac{m(m-1)}{2}}$ a matrix of the above form whose 
rows have pairwise dot product equal to $a$. Using Hardamard matrices constructed by Sylvester we classify 
all 
matrices of the above form whose rows have pairwise dot product equal to $a$. 
We also use our results to reformulate the Hadamard conjecture.
\end{abstract}

\maketitle
\section{Introduction}
Hadamard matrices are named after a J. Hadamard 1893 paper \cite{Ban1}. 
\begin{definition}
A {\bf Hadamard matrix} is a matrix of the form $H_m=[\epsilon_{ij}]_{i,j=1}^m$ where
$\epsilon_{ij}=\pm1$ for all $1\le i,j\le m$ and the rows are orthogonal.
\end{definition}

It follows that $H_mH_m^T=mI_m$.
It is known that Hadamard matrices do not exist unless $m=2,4n$ for $n\ge 1$.
Also note that if we multiply any row or column of $H_m$ by $-1$, it is
still a Hadamard matrix.
This gives rise to the Hadamard Conjecture:

\begin{hadamard}
For every $4n$ a Hadamard matrix exists in $\RR^{4n}$.
\end{hadamard}

The first examples of Hadamard matrices were given by J. J. Sylvester in 1867 \cite{Ban1}.
\begin{equation*}
 H_2=\begin{bmatrix}
1&1\\
1&-1
\end{bmatrix}
\end{equation*}

\begin{equation*}
H_4=\begin{bmatrix}
1&1&1&1\\
1&-1&1&-1\\
1&1&-1&-1\\
1&-1&-1&1
\end{bmatrix}
\end{equation*}
and

\begin{equation*}
H_{2^m}= \begin{bmatrix}
H_{2^{m-1}}&H_{2^{m-1}}\\
H_{2^{m-1}}&-H_{2^{m-1}}
\end{bmatrix}
\end{equation*}

When building a block matrix, we typically use the standard notation where we leave off the brackets of the block, resulting in the standard form of a matrix. However, it is also possible to consider block matrices from a different perspective.

\begin{definition}
If $A$ is an $m\times n$ matrix and $B$ is a $p\times q$ matrix, the {\bf Kronecker product}
$A\otimes B$ is the $pm\times qn$ block matrix:
\[ A\otimes B=\begin{bmatrix}
a_{11}B &\cdots &a_{1n}B\\
\vdots&\ddots&\vdots\\
a_{m1}B&\cdots & a_{mn}B
\end{bmatrix} \]
\end{definition}

It follows that $H_2\otimes H_2=H_4$, and in general, $H_{2^m}=H_2 \otimes H_{2^{m-1}}$,
leading to the following proposition.

\begin{proposition}

For all $m \in \ZZ^{+}$, $H_{2^m} = H_{2}^{\otimes m}$ is a 
Hadamard matrix.
\end{proposition}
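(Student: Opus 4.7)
The plan is to proceed by induction on $m$, using the recursive identity $H_{2^m} = H_2 \otimes H_{2^{m-1}}$ noted just before the statement, together with the standard algebraic properties of the Kronecker product.

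For the base case $m=1$, there is nothing to prove: $H_2^{\otimes 1} = H_2$, which is displayed above and is Hadamard by inspection (entries $\pm 1$, orthogonal rows). For the inductive step, I would assume $H_{2^{m-1}} = H_2^{\otimes (m-1)}$ is a Hadamard matrix of order $2^{m-1}$, so its entries are $\pm 1$ and $H_{2^{m-1}} H_{2^{m-1}}^T = 2^{m-1} I_{2^{m-1}}$. I then need to verify that $H_{2^m} := H_2 \otimes H_{2^{m-1}}$ has entries $\pm 1$ and satisfies $H_{2^m} H_{2^m}^T = 2^m I_{2^m}$.

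The entrywise $\pm 1$ condition is immediate from the definition of the Kronecker product, since each block $a_{ij} H_{2^{m-1}}$ has entries $\pm 1 \cdot (\pm 1) = \pm 1$. For the orthogonality condition, I would invoke two standard identities that I take as background on the Kronecker product: $(A\otimes B)^T = A^T \otimes B^T$ and the mixed-product property $(A\otimes B)(C\otimes D) = (AC)\otimes(BD)$ whenever the products $AC$ and $BD$ are defined. Then
\begin{align*}
H_{2^m} H_{2^m}^T &= (H_2 \otimes H_{2^{m-1}})(H_2 \otimes H_{2^{m-1}})^T \\
&= (H_2 H_2^T) \otimes (H_{2^{m-1}} H_{2^{m-1}}^T) \\
&= (2 I_2)\otimes(2^{m-1} I_{2^{m-1}}) = 2^m (I_2 \otimes I_{2^{m-1}}) = 2^m I_{2^m},
\end{align*}
closing the induction.

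Honestly, there is no real obstacle here; the only thing to be careful about is to state (or cite) the two Kronecker product identities, since the excerpt has defined $\otimes$ but not developed its algebra. If the paper prefers a self-contained argument, I would instead write $H_{2^m}$ as the explicit $2\times 2$ block matrix displayed in the recursion and verify row orthogonality by splitting into pairs of rows coming from the same block-row (orthogonality inherited from $H_{2^{m-1}}$) and pairs from different block-rows (where the contributions from the left and right halves cancel by the sign pattern of $H_2$).
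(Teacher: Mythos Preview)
Your argument is correct. The paper itself does not supply a proof of this proposition; it is stated without proof, as a standard fact following immediately from the Sylvester recursion $H_{2^m} = H_2 \otimes H_{2^{m-1}}$. Your induction via the mixed-product and transpose identities for the Kronecker product is the expected justification, and your alternative block-matrix verification matches exactly what one would do to keep the argument self-contained given that the paper defines $\otimes$ but does not develop its algebra.
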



Recall that a partial Hadamard matrix is a matrix whose entries are of the form $\pm 1$, and its rows are pairwise orthogonal.

Motivated by our quest to find Hadamard matrices, we extend the notion of 
(partial) Hadamard matrices to (non-square) matrices of the form
\[A_{mn}=\begin{bmatrix}
a_{11}&\cdots &a_{1n}\\
\vdots&\ddots &\vdots\\
a_{m1}&\cdots &a_{mn}\\
\end{bmatrix}
\]
\begin{center}
$|a_{ij}| = |a_{i'j}|=\lambda_{j} \quad \mbox{ for all } \quad 1 \leq i, i' \leq m$ and \\
$a_{1j} = \lambda_{j}   \qquad  \mbox{ for all } \quad 1\le j \le n$.
\end{center}

We will study matrices of this form and find relationships between them and Hadamard matrices. These relationships give insight into the combinatorial 
structure of such matrices. We then show that finding a Hadamard matrix of order $m$ is equivalent 
to finding 
a lattice point in $\mathbb{R}^{2^{m-1}}$ with $m$ ones and $2^{m-1} - m$ 
zeros. 

In the next section, we introduce the {\bf Column Pairwise Product} of vectors and apply this to the columns of the truth table. We show that the rows of the resulting matrix are a subset of the rows of a Hadamard matrix. 
To further explore this relationship, we define the concept of {\bf $m$-Hadamardesque} matrices, and develop their properties. We also demonstrate that the classification of Hadamard matrices can be achieved through the use of $m$-Hadamardesque matrices.

\section{Main Results}

In this section for every $m \in \mathbb{Z}^{+}$ we consider all vectors of the 
form $(1, \pm 1, \ldots ,\pm 1 ) \in \mathbb{R}^{m}$. We see that there are $2^{m-1}$ 
vectors of this form and for each one we calculate its contribution to the dot 
product of the rows. In Definition \ref{def1}, for each column we associate a coordinate in 
$\mathbb{R}^{2^{m-1}}$ which 
will measure the amount of ``weight" to give to that column vector.

\subsection{Analysis of the Column Pairwise Product}

\begin{definition}
For a vector $v = (v_1, \ldots, v_m) \in \mathbb{R}^{m}$ define its \textbf{Column Pairwise
Product} to be the vector 
\[v' = (v_1v_2, v_1v_3, v_2v_3,v_1v_4,v_2v_4,v_3v_4, \ldots , v_1v_m, \ldots, v_{m-
1}v_m) \in \mathbb{R}^{(m(m-1))/2}.\]
\end{definition}
Given $A=[a_{ij}]_{i,j=1}^m$ the pairwise dot product of its rows is

\begin{align*}R&=(\langle r_1, r_2\rangle, \langle r_1, r_3\rangle, \langle r_2, r_3\rangle,
\langle r_1,r_4\rangle,\langle r_2,r_4\rangle,\langle r_3,r_4\rangle,\ldots,
\langle r_1, r_m\rangle, \ldots, \langle r_{m-1}, r_m\rangle)\\
&=\left (\sum_{i=1}^ma_{1i}a_{2i},\sum_{i=1}^m a_{1i}a_{3i},\sum_{i=1}^ma_{2i}a_{3i},
\sum_{i=1}^ma_{1i}a_{4i},
\ldots \right ). \end{align*}

Also, the matrix of Column Pairwise Products of the columns of $A$ is
\[ C=\begin{bmatrix}
a_{11}a_{21}&a_{12}a_{22}&a_{13}a_{23}&a_{14}a_{24}&\cdots\\
a_{11}a_{31}&a_{12}a_{32}&a_{13}a_{33}&a_{14}a_{34}&\cdots   \\
a_{21}a_{31}&a_{22}a_{32}&a_{23}a_{33}&a_{24}a_{34}&\cdots     \\
a_{11}a_{41}&a_{12}a_{42}&a_{13}a_{43}&a_{14}a_{44}&\cdots   \\
\vdots &\vdots&\vdots&\vdots&\vdots
\end{bmatrix}
\]
Note that the sum of the column vectors of the matrix $C$ is a column vector whose transpose
is the row vector $R$. So we have:

\begin{proposition}\label{prop4}
The pairwise dot product of a matrix's rows is an element of $\RR^{(m(m-1))/2}$ and
 is equal to the transpose of the sum 
of the Column Pairwise Products of its columns.
\end{proposition}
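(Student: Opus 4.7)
The plan is to prove Proposition \ref{prop4} by directly unpacking the definitions and matching entries on both sides. There is no substantive obstacle here; the statement is essentially a bookkeeping identity about how indices in the Column Pairwise Product relate to indices in the pairwise dot product of rows.

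First I would fix the indexing convention used in the statement: pairs $(k,\ell)$ with $1 \le k < \ell \le m$ are listed in the order $(1,2),(1,3),(2,3),(1,4),(2,4),(3,4),\ldots$, giving exactly $m(m-1)/2$ slots, which immediately settles the claim that $R \in \mathbb{R}^{m(m-1)/2}$. Next, I would observe that the matrix $C$ is built so that its $t$-th row corresponds to some pair $(k_t,\ell_t)$ in the same enumeration, and its $i$-th column has entries $a_{k_t,i}\,a_{\ell_t,i}$ indexed by $t$; in other words, the $i$-th column of $C$ is exactly the Column Pairwise Product $v'$ of the $i$-th column $v = (a_{1i},\ldots,a_{mi})$ of $A$.

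With those two bookkeeping facts in place, the proof reduces to computing the $t$-th coordinate of the sum of the columns of $C$. Summing across $i = 1, \ldots, m$ gives
\[
\sum_{i=1}^m a_{k_t,i}\,a_{\ell_t,i} \;=\; \langle r_{k_t}, r_{\ell_t}\rangle,
\]
which is precisely the $t$-th coordinate of $R$. Since the orderings of the pairs on the two sides agree by construction, the transpose of the sum of the columns of $C$ equals $R$, establishing the proposition.

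The hardest aspect, if anything, is not the mathematics but the notation: one must be careful that the enumeration of pairs used to list rows of $C$ matches the enumeration used to list coordinates of $R$. Once that alignment is stated explicitly, the identity is immediate from the definition of the dot product.
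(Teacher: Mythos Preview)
Your proof is correct and follows essentially the same approach as the paper: the paper simply notes, immediately before stating the proposition, that the sum of the column vectors of $C$ is a column vector whose transpose is $R$, which is exactly the entrywise computation you carry out. Your version is more explicit about the index bookkeeping, but the argument is the same.
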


\begin{remark}
The Column Pairwise Product of $v$ and $-v$ are the same so we may assume the first 
row of our matrix has no negative entries.
\end{remark}

Let us now consider the Column Pairwise Product of each of the $2^{m-1}$ possible 
columns of the form $(1, \pm 1, \ldots ,\pm 1 ) \in \RR^{m}$ (i.e., the truth table). So
\[T_1= \begin{bmatrix}
1 \end{bmatrix}\ \ 
T_2= \begin{bmatrix}
1&1\\
1&-1
\end{bmatrix}
\ \ T_3=\begin{bmatrix}
1&1&1&1\\
1&-1&1&-1\\
1&1&-1&-1
\end{bmatrix}
\]
\[T_4 = \begin{bmatrix}
1&1&1&1&1&1&1&1\\
1&-1&1&-1&1&-1&1&-1\\
1&1&-1&-1&1&1&-1&-1\\
1&1&1&1&-1&-1&-1&-1
\end{bmatrix}
\]
Note that the rows of $T_3$ are the $1,2^0+1,2^1+1$ rows of $H_4$ and the rows of $T_4$ are the
$1,2^0+1,2^1+1,2^2+1$ rows of $H_8$. Now we need to formalize this.

\begin{lemma}\label{prop1}
If for dimension $m$ the set of possible columns of the form $(1, \pm 1, \ldots, \pm 1 ) \in \RR^{m}$ are
the columns of the $m\times 2^{m-1}$ matrix
\[T_m= \begin{bmatrix}
c_{1}&\cdots &c_{2^{m-1}}
\end{bmatrix}
\]
then for dimension $m+1$ the possible columns are
\[T_{m+1} = \begin{bmatrix}
c_{1}&\cdots &c_{2^{m-1}}&c_{1}&\cdots &c_{2^{m-1}}\\
1&\cdots &1&-1&\cdots &-1
\end{bmatrix}
\]
Moreover, in this case all the rows are orthogonal.
\end{lemma}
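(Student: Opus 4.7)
The plan is to argue by induction on $m$. For the base case $m=1$, note that $T_1 = [1]$ is the only vector of the form $(1) \in \mathbb{R}^1$, and the single row is vacuously orthogonal to the (empty) rest; one can equally take $m=2$ as the base, where $T_2$ has rows $(1,1)$ and $(1,-1)$, which are orthogonal.

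For the inductive step, assume the statement holds at level $m$: the columns $c_1,\ldots,c_{2^{m-1}}$ exhaust the set of vectors $(1,\pm 1,\ldots,\pm 1)\in\mathbb{R}^m$, and the rows $r_1^{(m)},\ldots,r_m^{(m)}$ of $T_m$ are pairwise orthogonal. I would first handle the column enumeration claim: any vector $(1,\pm 1,\ldots,\pm 1)\in\mathbb{R}^{m+1}$ can be written uniquely as $\binom{c_i}{\epsilon}$ for some $i\in\{1,\ldots,2^{m-1}\}$ and $\epsilon\in\{1,-1\}$, since its first $m$ coordinates form a vector of the inductively classified type. The matrix $T_{m+1}$ as defined contains exactly each such column once (the first block $\binom{c_i}{1}$ and second block $\binom{c_i}{-1}$), giving $2\cdot 2^{m-1}=2^m$ distinct columns, which is the full count.

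For the orthogonality claim, observe that for $1\le i\le m$, row $i$ of $T_{m+1}$ is the concatenation $(r_i^{(m)},\, r_i^{(m)})$, while row $m+1$ is $(\mathbf{1},-\mathbf{1})$ where $\mathbf{1}\in\mathbb{R}^{2^{m-1}}$. There are two cases to check:
\begin{itemize}
\item For $1\le i<j\le m$, the inner product is
\[\langle (r_i^{(m)},r_i^{(m)}),(r_j^{(m)},r_j^{(m)})\rangle = 2\langle r_i^{(m)},r_j^{(m)}\rangle = 0\]
by the inductive hypothesis.
\item For $1\le i\le m$ and $j=m+1$, the inner product is
\[\langle (r_i^{(m)},r_i^{(m)}),(\mathbf{1},-\mathbf{1})\rangle = \sum_k r_i^{(m)}(k) - \sum_k r_i^{(m)}(k) = 0\]
directly from the block structure, with no appeal to induction needed.
\end{itemize}

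There is no real obstacle in this argument; it is a direct structural induction in which the recursive definition of $T_{m+1}$ (a horizontal doubling of $T_m$ together with a sign-splitting final row) is perfectly matched to both claims. The only small care needed is to verify that all $2^m$ columns listed in $T_{m+1}$ are distinct, which follows because within each half-block the columns are distinct by the inductive hypothesis, and the two half-blocks are separated by their final coordinate.
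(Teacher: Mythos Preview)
Your proof is correct. The paper actually states this lemma without proof, treating both claims as evident from the recursive description of $T_{m+1}$. Your inductive argument is the natural way to make this explicit: the column-enumeration step is immediate from splitting on the last coordinate, and the orthogonality check via the two cases (old rows doubled, and the new sign-splitting row) is exactly what the block structure suggests. There is nothing to correct.
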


We will list out the Column Pairwise Products as columns
of a matrix called $CT_m$ where the $i^{th}$ column corresponds to the Column 
Pairwise Product of the $i^{th}$ column from $T_m$. So this is a $\frac{m(m-
1)}{2}\times 2^{m-1}$ 
matrix. This convention will help in 
higher dimensions.

\begin{ex}\label{ex-1}
The two possible columns in dimension 2 are given by the columns of the matrix
\[ T_2=\begin{bmatrix}
1&1\\
1&-1
\end{bmatrix}\] 
The matrix where the $i^{th}$ column corresponds to the Column Pairwise Product
of the $i^{th}$ column above is
\[ CT_2=\begin{bmatrix}
1&-1
\end{bmatrix}\]
\end{ex}

Notice $T_2$ from above 
is the Hardamard matrix
$H_2$ and $CT_2$ is the second  
row of $H_2$.

\begin{ex}
For $m=3$,
\[T_3=\begin{bmatrix}
1&1&1&1\\
1&-1&1&-1\\
1&1&-1&-1
\end{bmatrix}\]

The four possible columns in $CT_3$ are
\[CT_3= \begin{bmatrix}
1&-1&1&-1\\
1&1&-1&-1\\
1&-1&-1&1
\end{bmatrix} \]
\end{ex}

Also, given $T_4$ above we have
\[ CT_4=\begin{bmatrix}
1&-1&1&-1&1&-1&1&-1\\
1&1&-1&-1&1&1&-1&-1\\
1&-1&-1&1&1&-1&-1&1\\
1&1&1&1&-1&-1&-1&-1\\
1&-1&1&-1&-1&1&-1&1\\
1&1&-1&-1&-1&-1&1&1
\end{bmatrix}\]

\begin{proposition}\label{prop2}
The rows of $T_m$ are the rows $\{1, 2^0+1, 2^1+1, \ldots, 2^{m-2}+1\}$ of $H_{2^{m-1}}$. 
\end{proposition}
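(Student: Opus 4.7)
The plan is induction on $m$. I will combine two recursive descriptions: Lemma~\ref{prop1}, which describes $T_{m+1}$ as two side-by-side copies of $T_m$ with an extra row $(1,\dots,1,-1,\dots,-1)$ appended at the bottom, and the Sylvester recursion
\[
H_{2^m}=\begin{bmatrix}H_{2^{m-1}}&H_{2^{m-1}}\\ H_{2^{m-1}}&-H_{2^{m-1}}\end{bmatrix}.
\]
The base case $m=2$ is immediate: $T_2=H_2$ and the claimed index set $\{1,2^0+1\}=\{1,2\}$ lists both rows.

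For the inductive step, I will assume that rows $1,\dots,m$ of $T_m$ coincide with rows $1,2^0+1,\dots,2^{m-2}+1$ of $H_{2^{m-1}}$, respectively. By Lemma~\ref{prop1}, the first $m$ rows of $T_{m+1}$ each have the form $[r\mid r]$ where $r$ is the corresponding row of $T_m$. The Sylvester recursion shows that row $i$ of $H_{2^m}$ for $1\le i\le 2^{m-1}$ is exactly $[\rho_i\mid \rho_i]$, where $\rho_i$ denotes the $i$-th row of $H_{2^{m-1}}$. Combining this with the inductive hypothesis, the first $m$ rows of $T_{m+1}$ are rows $1,2^0+1,\dots,2^{m-2}+1$ of $H_{2^m}$.

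For the newly appended row, Lemma~\ref{prop1} gives it as $(1,\dots,1,-1,\dots,-1)$ with $2^{m-1}$ ones followed by $2^{m-1}$ negative ones. The Sylvester recursion says row $2^{m-1}+1$ of $H_{2^m}$ equals $[\rho_1\mid -\rho_1]$ where $\rho_1=(1,\dots,1)$ is the all-ones row of $H_{2^{m-1}}$, which is precisely this vector. Hence the rows of $T_{m+1}$ are exactly rows $\{1,2^0+1,\dots,2^{m-2}+1,2^{m-1}+1\}$ of $H_{2^m}$, closing the induction.

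The main obstacle is nothing deeper than careful index-tracking: making sure that side-by-side duplication matches the top block of the Sylvester decomposition at precisely the positions dictated by the inductive hypothesis, and that the appended $\pm 1$ row is identified with row $2^{m-1}+1$ rather than any other row in the bottom block. I do not anticipate a substantive difficulty beyond this bookkeeping.
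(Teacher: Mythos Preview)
Your proof is correct and follows essentially the same route as the paper: induction on $m$, with the inductive step combining Lemma~\ref{prop1} with the Sylvester block decomposition of $H_{2^m}$ to identify the first $m$ rows of $T_{m+1}$ with the specified rows in the top block, and then matching the appended $(1,\dots,1,-1,\dots,-1)$ row with row $2^{m-1}+1$. Your write-up is in fact slightly more explicit about why the top-block rows have the form $[\rho_i\mid\rho_i]$, but the argument is the same.
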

\begin{proof}
We proceed by induction on $m$. Clearly for $m = 2$, this is true as the rows of 
$T_2$ are the first and second 
rows of $H_2$.  Now assume it is true for $m$. Note that 
\begin{equation*}
H_{2^m}= \begin{bmatrix}
H_{2^{m-1}}&H_{2^{m-1}}\\
H_{2^{m-1}}&-H_{2^{m-1}}
\end{bmatrix}
\end{equation*}
so by Lemma \ref{prop1} the first $m$ rows of $T_{m+1}$ are the rows $\{1, 2^0+1, 2^1+1, \ldots, 2^{m-2}+1\}$ of $H_{2^{m}}$. The last row is $(1,\ldots,1,-1,\ldots,-1) \in 
\RR^{2^m}$ where the first $2^{m-1}$ coordinates are $1$ and the last $2^{m-1}$ 
coordinates are $-1$. This corresponds to the $2^{m-1}+1$ row of $H_{2^{m}}$.
\end{proof}

We now compute $CT_m$ by induction as well. We show that its rows are also rows of 
the Hadarmard matrix. We have already computed $CT_2, CT_3, CT_4$ in Example \ref{ex-1}.
\begin{proposition}\label{prop3}
For $m > 2$, let 
\[CT_{m-1} = \begin{bmatrix}
r_1\\
\vdots\\
r_{\frac{(m-1)(m-2)}{2}}
\end{bmatrix}
\]
and
\[T_{m-1} = \begin{bmatrix}
r'_1\\
\vdots\\
r'_{m-1}
\end{bmatrix}
\]
then
\[CT_{m} = \begin{bmatrix}
(r_{1},r_{1})\\
\vdots\\
(r_{\frac{(m-1)(m-2)}{2}},r_{\frac{(m-1)(m-2)}{2}})\\
(r'_1,-r'_1)\\
\vdots\\
(r'_{m-1}, -r'_{m-1})
\end{bmatrix}
\]
where $(v,w)$ denotes the concatenation of $v$ and $w$.
\end{proposition}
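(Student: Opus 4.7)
The plan is to prove this by direct computation, exploiting the way the ordering of index pairs in the Column Pairwise Product interacts with the block structure of $T_m$ given by Lemma \ref{prop1}. First I would observe that the list of index pairs defining the CPP of a vector $v=(v_1,\ldots,v_m)\in\RR^m$ breaks into two consecutive blocks: the first $\frac{(m-1)(m-2)}{2}$ pairs $(i,j)$ all satisfy $j\le m-1$ and are exactly the pairs used to form the CPP of $(v_1,\ldots,v_{m-1})$, while the remaining $m-1$ pairs are $(1,m),(2,m),\ldots,(m-1,m)$. Consequently, for any $v\in\RR^m$ written as $v=(u,v_m)$ with $u=(v_1,\ldots,v_{m-1})$, the CPP of $v$ is the concatenation of the CPP of $u$ with the scalar multiple $v_m\cdot u$.

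Next I would invoke Lemma \ref{prop1} to express the columns of $T_m$ in the block form $(c_j,1)$ for $1\le j\le 2^{m-2}$ and $(c_{j-2^{m-2}},-1)$ for $2^{m-2}<j\le 2^{m-1}$, where $c_1,\ldots,c_{2^{m-2}}$ are the columns of $T_{m-1}$. Applying the observation from the previous paragraph to each case, the CPP of $(c_j,1)$ is the $j$-th column of $CT_{m-1}$ concatenated with $c_j$, while the CPP of $(c_j,-1)$ is the $j$-th column of $CT_{m-1}$ concatenated with $-c_j$.

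Assembling these $2^{m-1}$ vectors as the columns of $CT_m$, the first $\frac{(m-1)(m-2)}{2}$ rows of $CT_m$ simply repeat the rows of $CT_{m-1}$ twice, yielding the rows $(r_i,r_i)$, while the last $m-1$ rows of $CT_m$ consist of the rows of $T_{m-1}$ followed by their negatives, yielding the rows $(r'_i,-r'_i)$. This is exactly the asserted block decomposition.

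I do not expect any real obstacle beyond careful bookkeeping. The only point requiring explicit verification is that the ordering convention in the definition of the CPP places every pair whose larger index is $\le m-1$ before every pair of the form $(i,m)$; this is immediate from the definition, which lists pairs by increasing second coordinate and then, within each second coordinate, by increasing first coordinate.
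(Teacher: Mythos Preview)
Your proof is correct and follows essentially the same approach as the paper: both arguments exploit the block structure of $T_m$ from Lemma~\ref{prop1} and track how the Column Pairwise Product decomposes accordingly. You organize the computation column-by-column via the identity $\mathrm{CPP}(u,v_m)=(\mathrm{CPP}(u),\,v_m u)$, whereas the paper argues row-by-row, but these are two views of the same direct calculation.
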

\begin{proof}
Looking at the construction of $T_m$ in Lemma \ref{prop1}, we see the first $m-
1$ rows are simply the rows of $T_{m-1}$ doubled. This means the first $m-1$ 
coordinates of the $i^{th}$ and $i + 2^{m-2}$ columns of $T_m$ are the same for 
$1 \leq i \leq 2^{m-2}$. Therefore the first $\frac{(m-1)(m-2)}{2}$ rows of 
$CT_m$ are 
the rows of $CT_{m-1}$ doubled as stated in the above proposition.

To derive the remaining $m-1$ rows of $CT_m$, we can simply note that the last row of $T_m$ consists of a vector containing $2^{m-2}$ ones and $2^{m-2}$ negative ones. Hence if $r'_i$ is the $i^{th}$
row of $T_{m-1}$ then $(r'_i, -r'_i)$ would be the $(\frac{(m-1)(m-2)}{2} + 
i)^{th}$ row of $CT_{m}$.
\end{proof}

From the definitions of $T_m$ and $CT_m$ we have:

\begin{lemma}\label{lem1}
Let $1\le r_i<r_j\le m$. The 
\[ \left ( \frac{(r_j-1)(r_j-2)}{2}+r_i\right )^{th} \mbox{row in }CT_m,\]
corresponds to the coordinate wise product of the $r_i^{th}$ and $r_j^{th}$
row of $T_m$.
\end{lemma}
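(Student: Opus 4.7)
The plan is to unpack the definition of the Column Pairwise Product and compare indices. By definition,
\[
v' = (v_1 v_2,\ v_1 v_3, v_2 v_3,\ v_1 v_4, v_2 v_4, v_3 v_4,\ \ldots,\ v_1 v_m, \ldots, v_{m-1} v_m),
\]
so the coordinates of $v'$ are listed in lexicographic order on pairs $(i,j)$ with $i<j$, ordered first by the second index $j$ and then by the first index $i$. First I would verify by counting that the pair $(i,j)$ with $1 \le i < j \le m$ sits in position $\frac{(j-1)(j-2)}{2} + i$ of $v'$: the number of pairs with second coordinate strictly less than $j$ is $1+2+\cdots+(j-2)=\frac{(j-1)(j-2)}{2}$, and within the block whose second coordinate equals $j$, the pairs $(1,j),(2,j),\ldots,(j-1,j)$ appear in increasing order of $i$, so $(r_i,r_j)$ is the $r_i$-th entry of the block indexed by $r_j$.

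Next, I would apply this coordinate identification columnwise to $T_m$. Let $c_k$ denote the $k$-th column of $T_m$, so that by construction the $k$-th column of $CT_m$ is the Column Pairwise Product $c_k'$. By the counting step, the entry of $c_k'$ in position $\frac{(r_j-1)(r_j-2)}{2}+r_i$ equals $(c_k)_{r_i}\,(c_k)_{r_j}$. But $(c_k)_{r_i}$ is exactly the $k$-th entry of the $r_i$-th row of $T_m$, and analogously for $(c_k)_{r_j}$, so as $k$ runs from $1$ to $2^{m-1}$ the row in question is precisely the coordinate-wise product of the $r_i$-th and $r_j$-th rows of $T_m$, as claimed.

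The main (and essentially the only) obstacle is the bookkeeping of the indexing formula $\frac{(r_j-1)(r_j-2)}{2}+r_i$; once this triangular-number identification of the position of $(r_i,r_j)$ in $v'$ is established, the lemma follows by a direct unwinding of the definitions of $T_m$ and $CT_m$.
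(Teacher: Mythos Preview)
Your argument is correct and is essentially the same approach the paper takes: the paper states the lemma as an immediate consequence of the definitions of $T_m$ and $CT_m$ without giving a proof, and your proposal is precisely the careful unwinding of those definitions together with the triangular-number index computation. There is nothing to add.
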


Now we can compare the rows of $CT_m$ with the rows of $H_{2^{m-1}}$,
$m\ge 2$.

\begin{proposition}
The rows of $CT_m$ are the rows of $H_{2^{m-1}}$, $m 
\geq 2$.
\end{proposition}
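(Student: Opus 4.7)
My plan is a straightforward induction on $m$, leveraging the recursive description of $CT_m$ in Proposition \ref{prop3} together with the block structure
\[
H_{2^{m-1}} = \begin{bmatrix} H_{2^{m-2}} & H_{2^{m-2}} \\ H_{2^{m-2}} & -H_{2^{m-2}} \end{bmatrix}.
\]
The base case $m=2$ is immediate, since $CT_2 = [\,1,\,-1\,]$ is the second row of $H_2$.

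For the inductive step, I would assume that every row of $CT_{m-1}$ is a row of $H_{2^{m-2}}$, and split the rows of $CT_m$ into the two types produced by Proposition \ref{prop3}. The first $\frac{(m-1)(m-2)}{2}$ rows are of the form $(r_i, r_i)$, where $r_i$ is a row of $CT_{m-1}$; by the inductive hypothesis $r_i$ is a row of $H_{2^{m-2}}$, and the block form above shows that $(r_i, r_i)$ is exactly one of the first $2^{m-2}$ rows of $H_{2^{m-1}}$. The remaining $m-1$ rows are of the form $(r'_i, -r'_i)$, where $r'_i$ is a row of $T_{m-1}$; by Proposition \ref{prop2}, $r'_i$ is a row of $H_{2^{m-2}}$, so $(r'_i, -r'_i)$ is one of the last $2^{m-2}$ rows of $H_{2^{m-1}}$.

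In both cases each row of $CT_m$ is identified as a row of $H_{2^{m-1}}$, completing the induction. There is no real obstacle here: the combinatorial bookkeeping is essentially done for us by Propositions \ref{prop2} and \ref{prop3}, and the Sylvester recursion plays its role automatically. The only thing one should be careful about is to confirm that the concatenation $(v,v)$ versus $(v,-v)$ matches the top versus bottom block of the Sylvester form, which is immediate from the definition.
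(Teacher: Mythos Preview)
Your argument is correct and is exactly the paper's approach: induction on $m$, with the inductive step combining Proposition~\ref{prop3}, Proposition~\ref{prop2}, and the Sylvester block form of $H_{2^{m-1}}$. You have simply written out explicitly the step the paper leaves as ``it follows.''
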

\begin{proof}
We proceed by induction on $m$. From Example \ref{ex-1} this is clearly true 
for $m = 2$. This is our base case.

Now assume that the rows of $CT_{m-1}$ are the rows of $H_{2^{m-2}}$. By Proposition \ref{prop2}, Proposition \ref{prop3},  and the construction of $H_{2^{m-1}}$, it follows that the rows of $CT_m$ are the rows of $H_{2^{m-1}}$.
\end{proof}

\begin{remark}
    If we permute the columns of $T_m$ then we can also permute the columns of 
    $H_{2^{m-1}}$ in the same manner.
\end{remark}

\begin{definition}
    For each $m \geq 2$, call \textbf{$R_m$} the rows of $CT_m$. Let 
    \textbf{$RC_m$} be the set of 
    rows in $H_{2^{m-1}}$ which are not in $R_m$
\end{definition}

\begin{remark}
    Since the rows of $H_{2^{m-1}}$ form an equal norm orthogonal basis of $\RR^{2^{m-1}}$, so do 
    $R_m \cup RC_m$. The rows in $R_m$ and $RC_m$ are mutually orthogonal. 
\end{remark}

\subsection{Classifying all matrices whose columns have equal modulus coordinates}

In this subsection we study all matrices whose columns are a positive multiple of 
some column in $T_m$ for each $m \geq 2$. We analyze 
the pairwise dot 
product between 
their rows and relate the results obtained in the previous section to 
classify all matrices of this form.

\begin{definition}\label{def2}
    Call a $m \times n$ matrix $M$ \textbf{$m$-Hadamardesque} if for every column 
    $c \in M$, $c = pc'$ for some column $c' \in T_m$ and $p > 0$. 
\end{definition}
Note that we do not require every column of $T_m$ to have a multiple that shows up in $M$, and it is also acceptable for a column from $T_m$ to have multiples that appear more than once in $M$.

Now we want to classify $m$-Hadamardesque matrices.
\begin{definition}\label{def1} Let $M$ be a $m \times n$ $m$-Hadamardesque matrix. By definition, the $j^{th}$ column of $M$ is of the form $p_jc_i$ for some $p_j>0$ and some column $c_i$ in $T_m$. 
    We define the \textbf{Column Representation Vector} of $M$
    to be the vector $v \in \RR^{2^{m-1}}$ whose the coordinate $v_i$ is equal to 
    $\sum_{j}p_{j}^2$ if $p_jc_i$ is a column in $M$ (and 0 otherwise).
\end{definition}
\begin{ex}
Let \[M = \begin{bmatrix}
2&1&\sqrt{3}&1&\sqrt{5}&3\\
2&-1&\sqrt{3}&-1&\sqrt{5}&-3\\
2&1&\sqrt{3}&-1&\sqrt{5}&3\\
2&-1&\sqrt{3}&-1&-\sqrt{5}&3\\
\end{bmatrix}
\]
and let $v$ be its Column Representation Vector.\\
\\
Since $M$ has 4 rows, its column vectors come from
\[T_4 = \begin{bmatrix}
1&1&1&1&1&1&1&1\\
1&-1&1&-1&1&-1&1&-1\\
1&1&-1&-1&1&1&-1&-1\\
1&1&1&1&-1&-1&-1&-1
\end{bmatrix}
\]
Let $c_i$ be the $i^{th}$ column of $T_4$.\\
\\
The first and third columns of $M$ are $2c_1$ and $\sqrt{3}c_1$ respectively. 
As such $v_1 = 4 + 3 = 7$.\\
The second column of $M$ is $c_6$. As such $v_6 = 1$.\\
The fourth column of $M$ is $c_8$. As such $v_8 = 1$.\\
The fifth column of $M$ is $\sqrt{5}c_5$. As such $v_5 = 5$.\\
The sixth column of $M$ is $3c_2$. As such $v_2 = 9$.\\
The columns $c_3,c_4,c_7$ do not appear in M and so $v_3=v_4=v_7=0$.
Hence $v = (7,9,0,0,5,1,0,1)$.
\end{ex}

\begin{proposition}\label{prop5}
    Let $1 \leq r_i < r_j \leq m$. Then the dot product of the $r_i^{th}$ and 
    $r_j^{th}$ rows in a m-Hadamardesque matrix is equal to the dot product of 
    its Column Representation Vector and the $(\frac{(r_j-1)(r_j-2)}{2} + 
    r_i)^{th}$ row in $CT_m$.
\end{proposition}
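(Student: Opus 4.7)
The plan is to expand the dot product on the left-hand side directly, use Lemma \ref{lem1} to identify entries of $CT_m$, and then regroup the sum according to which column of $T_m$ each column of $M$ is a positive multiple of, so that the Column Representation Vector appears naturally.

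First, I would fix notation. Write the columns of $M$ as $M_{\bullet j} = p_j c_{k(j)}$, where $c_{k(j)}$ is a column of $T_m$ and $p_j > 0$; this is possible by Definition \ref{def2}. Then for $1 \le r_i < r_j \le m$,
\[
\langle \text{row}_{r_i}(M), \text{row}_{r_j}(M)\rangle \;=\; \sum_{j=1}^{n} M_{r_i,j}\,M_{r_j,j} \;=\; \sum_{j=1}^{n} p_j^2 \,(c_{k(j)})_{r_i}\,(c_{k(j)})_{r_j}.
\]
The factor $(c_{k(j)})_{r_i}(c_{k(j)})_{r_j}$ is the entry in column $k(j)$ of the coordinate-wise product of rows $r_i$ and $r_j$ of $T_m$. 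By Lemma \ref{lem1}, that coordinate-wise product is exactly row $s := \frac{(r_j-1)(r_j-2)}{2} + r_i$ of $CT_m$. Hence
\[
\langle \text{row}_{r_i}(M), \text{row}_{r_j}(M)\rangle \;=\; \sum_{j=1}^{n} p_j^2 \,(CT_m)_{s,\,k(j)}.
\]

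Second, I would regroup the sum by the index $k \in \{1,\dots,2^{m-1}\}$ of the column of $T_m$. For each such $k$, collect the $j$'s with $k(j) = k$:
\[
\sum_{j=1}^{n} p_j^2 \,(CT_m)_{s,\,k(j)} \;=\; \sum_{k=1}^{2^{m-1}} \Bigl(\sum_{j:\,k(j)=k} p_j^2\Bigr) (CT_m)_{s,\,k}.
\]
By Definition \ref{def1}, the inner sum $\sum_{j:\,k(j)=k} p_j^2$ is precisely the $k$-th coordinate $v_k$ of the Column Representation Vector $v$ (taking the value $0$ when no column of $M$ is a multiple of $c_k$). Thus the expression collapses to $\sum_{k=1}^{2^{m-1}} v_k (CT_m)_{s,k}$, which is the dot product of $v$ with the $s$-th row of $CT_m$, as claimed.

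No step appears to be a genuine obstacle; the argument is essentially bookkeeping. The only point demanding care is making sure the reindexing correctly handles columns of $T_m$ that either do not appear in $M$ (contributing $v_k = 0$) or appear multiple times with different positive scalars (so that $v_k$ is the sum of the squares of all such scalars, matching Definition \ref{def1}).
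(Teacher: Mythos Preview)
Your proof is correct and is precisely the bookkeeping that the paper's one-line proof (``follows directly from the definitions'') leaves implicit. You have simply unpacked the definitions of $m$-Hadamardesque matrix, $CT_m$ (via Lemma~\ref{lem1}), and the Column Representation Vector, so the approach is the same as the paper's.
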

\begin{proof}
This follows directly from the definitions of $m$-Hadamardesque matrices and the Column Representation Vector.     
\end{proof}

\begin{theorem}\label{thm_ortho}
A m-Hadamardesque matrix has pairwise orthogonal rows iff its Column 
Representation Vector is in the span of $RC_m$.
\end{theorem}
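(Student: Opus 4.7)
The plan is to reduce the statement directly to Proposition \ref{prop5} together with the orthogonal decomposition noted in the remark following the definition of $R_m$ and $RC_m$. Let $M$ be an $m$-Hadamardesque matrix with Column Representation Vector $v \in \RR^{2^{m-1}}$. First I would observe that $M$ has pairwise orthogonal rows precisely when $\langle r_{r_i}, r_{r_j}\rangle = 0$ for all $1 \le r_i < r_j \le m$, so the characterization is really a system of $\tfrac{m(m-1)}{2}$ scalar equations.

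Next I would apply Proposition \ref{prop5} to rewrite each such equation as
\[ 0 = \langle r_{r_i}, r_{r_j} \rangle = \langle v,\, \rho_{ij} \rangle, \]
where $\rho_{ij}$ denotes the $\bigl(\tfrac{(r_j-1)(r_j-2)}{2}+r_i\bigr)$-th row of $CT_m$. As $(r_i,r_j)$ ranges over all pairs with $1 \le r_i < r_j \le m$, the indices $\tfrac{(r_j-1)(r_j-2)}{2}+r_i$ range over $\{1,2,\ldots,\tfrac{m(m-1)}{2}\}$, so the $\rho_{ij}$ are exactly the rows of $CT_m$, which by definition constitute $R_m$. Hence the row orthogonality of $M$ is equivalent to $v$ being orthogonal to every element of $R_m$.

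Finally I would invoke the observation (in the remark preceding Definition \ref{def2}) that $R_m \cup RC_m$ is an orthogonal basis of $\RR^{2^{m-1}}$, with $R_m$ and $RC_m$ mutually orthogonal. This means $R_m^{\perp} = \operatorname{span}(RC_m)$ inside $\RR^{2^{m-1}}$, and therefore $v \perp R_m$ if and only if $v \in \operatorname{span}(RC_m)$. Combining this with the previous paragraph gives the desired equivalence.

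There is no real obstacle; the only thing to be careful about is the bookkeeping of the indexing map $(r_i,r_j) \mapsto \tfrac{(r_j-1)(r_j-2)}{2}+r_i$, which is exactly the enumeration used throughout the section (as made explicit in Lemma \ref{lem1}) and so guarantees that the collection of $\rho_{ij}$ is precisely $R_m$ with no omissions or repetitions.
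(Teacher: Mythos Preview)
Your argument is correct and is exactly the paper's approach: apply Proposition~\ref{prop5} to identify the pairwise row inner products with $\langle v,\rho\rangle$ for $\rho$ ranging over $R_m$, and then use that $R_m\cup RC_m$ is an orthogonal basis so that $R_m^{\perp}=\operatorname{span}(RC_m)$. The paper's proof simply compresses these steps into one sentence.
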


\begin{proof}
Since $R_m$ is the set of rows of $CT_m$, and $RC_m$ spans the orthogonal 
complement of $R_m$, we see this follows immediately from Proposition 
\ref{prop5}.
\end{proof}

This result immediately classifies $m \times m$ Hadamard matrices in terms of $m$-Hadamardesque matrices.

\begin{theorem} Let $M$ be a $m \times m$ matrix. The following are equivalent. 
\begin{enumerate}
\item $M$ is a Hadamard matrix of order $m$.
\item $M$ is a (square) $m$-Hadamardesque matrix with entries $\pm 1$ and the Column 
Representation Vector $v$ is 
in the span of $RC_m$. 
\item $M$ is a (square) $m$-Hadamardesque matrix with the Column 
Representation Vector $v$ having $m$ ones and $2^{m-1} - m$ zeros, and $v$ is 
in the span of $RC_m$. 
\end{enumerate}
\end{theorem}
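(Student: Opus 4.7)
The plan is to use Theorem \ref{thm_ortho} as the hinge, together with the observation that for an $m$-Hadamardesque matrix the scaling constants $p_j$ are forced to equal $1$ as soon as the entries are $\pm 1$ (since $p_j > 0$ and $p_j \cdot (\pm 1) = \pm 1$ force $p_j = 1$). I would run the equivalence as $(2) \Leftrightarrow (1)$ and $(2) \Leftrightarrow (3)$.

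For $(2) \Rightarrow (1)$, I would simply invoke Theorem \ref{thm_ortho}: an $m$-Hadamardesque matrix with $\pm 1$ entries whose Column Representation Vector lies in the span of $RC_m$ has pairwise orthogonal rows, which together with $\pm 1$ entries is exactly the definition of a Hadamard matrix. For $(1) \Rightarrow (2)$, I would first normalize $M$ by negating individual columns so that the first row becomes $(1,\ldots,1)$; this operation was noted in the introduction to preserve the Hadamard property and the $\pm 1$ entries. After normalization each column of $M$ is a $\pm 1$ vector with leading entry $+1$, hence a column of $T_m$, so $M$ is $m$-Hadamardesque with every $p_j = 1$. Row orthogonality of $M$ together with Theorem \ref{thm_ortho} then places the CRV in the span of $RC_m$.

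The equivalence $(2) \Leftrightarrow (3)$ comes down to reading Definition \ref{def1} under the constraint $p_j = 1$: each coordinate $v_i$ of the CRV simply counts the number of times the column $c_i$ of $T_m$ appears among the $m$ columns of $M$. Direction $(3) \Rightarrow (2)$ is then immediate, since selecting columns of $T_m$ produces $\pm 1$ entries, and the hypothesis on $v$ is precisely the span condition. For $(2) \Rightarrow (3)$, the coordinates of $v$ are nonnegative integers summing to $m$; and having already concluded via $(2) \Rightarrow (1)$ that $M$ is Hadamard, its columns are pairwise orthogonal and hence distinct, so no coordinate of $v$ can exceed $1$. This forces exactly $m$ ones and $2^{m-1} - m$ zeros.

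The only real subtlety is the column-sign normalization used in $(1) \Rightarrow (2)$: a general Hadamard matrix need not have a positive first row, so the equivalence implicitly uses the freedom to multiply columns by $\pm 1$. Once that bookkeeping is set, the rest is a direct bookkeeping exercise around Theorem \ref{thm_ortho} and the definition of the CRV.
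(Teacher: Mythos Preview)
Your proposal is correct and follows essentially the same approach as the paper, pivoting everything on Theorem~\ref{thm_ortho} and the bookkeeping of Definition~\ref{def1}; you simply organize it as $(2)\Leftrightarrow(1)$ and $(2)\Leftrightarrow(3)$ rather than the paper's cycle $(1)\Rightarrow(2)\Rightarrow(3)\Rightarrow(1)$. If anything you are more careful than the paper: you flag the column-sign normalization needed for $(1)\Rightarrow(2)$ (which the paper leaves implicit), and in $(2)\Rightarrow(3)$ you give the actual reason the columns are distinct (column orthogonality of a Hadamard matrix), whereas the paper's one-line justification elides that step.
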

\begin{proof}
$(1\Rightarrow 2):$ This is immediate from Theorem \ref{thm_ortho}.

$(2\Rightarrow 3):$ Since $M$ has entries $\pm 1$ and is a square matrix of order $m$, there is exactly $m$ columns of $T_m$, each shows up one time in $M$. The conclusion follows.

$(3\Rightarrow 1):$ By definition, since the Representation Vector $v$ has $m$ ones and $2^{m-1} - m$ zeros, there are exactly $m$ columns of $T_m$ that appear in $M$. Moreover, since $M$ is a square matrix, these columns show up once in $M$. Theorem \ref{thm_ortho} guarantees that the rows of $M$ are orthogonal, so $M$ is a Hadamard matrix.
\end{proof}
\begin{remark}
There is non-square $m$-Hadamardesque matrix with the Column 
Representation Vector $v$ having $m$ ones and $2^{m-1} - m$ zeros, and $v$ is 
in the span of $RC_m$. For example, let 
\[M = \begin{bmatrix}
\frac{1}{\sqrt{2}}&\frac{1}{\sqrt{2}}&1&1&1\\
\frac{1}{\sqrt{2}}&\frac{1}{\sqrt{2}}&-1&1&-1\\
\frac{1}{\sqrt{2}}&\frac{1}{\sqrt{2}}&1&-1&-1\\
\frac{1}{\sqrt{2}}&\frac{1}{\sqrt{2}}&-1&-1&1\\
\end{bmatrix}
\]
Then $v=(1, 0, 0, 1, 0, 1, 1, 0)$.
\end{remark}

These results also have a number of other important corollaries.

\begin{corollary} Let $M$ be a $m \times m$ matrix with entries of the form $\pm 1$. Then $M$ is a partial Hadamard matrix if and only if $M$ is an m-Hadamardesque matrix with 
Column Representation Vector $v$ having $n$ ones and $2^{m-1} - n$ zeros, and $v$ is 
in the span of $RC_m$. 
\end{corollary}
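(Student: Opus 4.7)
The plan is to mirror the proof of the preceding theorem almost verbatim, with Theorem \ref{thm_ortho} doing all the real work. The key dictionary is the following: an $m$-Hadamardesque matrix has $\pm 1$ entries precisely when every column of $T_m$ that occurs does so with weight $p = 1$, and in that case the CRV has exactly $n$ ones and $2^{m-1}-n$ zeros provided each such column occurs at most once. I will read the statement as concerning an $m\times n$ matrix, since the letter $n$ appears in the conclusion (and the square case is already the previous theorem).

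For the forward direction, suppose $M$ is a partial Hadamard matrix. First I would multiply each column whose first entry is $-1$ by $-1$; this preserves both the $\pm 1$ structure and the pairwise orthogonality of the rows, so the result is still a partial Hadamard matrix and every column is now a column of $T_m$. Consequently $M$ is $m$-Hadamardesque with every weight $p_j = 1$, the CRV has exactly $n$ ones and $2^{m-1}-n$ zeros (one $1$ per distinct column appearing), and Theorem \ref{thm_ortho} gives that $v$ lies in the span of $RC_m$. For the reverse direction, if $M$ is $m$-Hadamardesque with CRV $v$ of the prescribed shape, then each nonzero coordinate equals $\sum p_j^2 = 1$ with $p_j > 0$, which forces a single occurrence with $p_j = 1$. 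Hence $M$ is $m\times n$ with $\pm 1$ entries, and Theorem \ref{thm_ortho} supplies pairwise orthogonality of the rows, so $M$ is a partial Hadamard matrix.

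The main obstacle, and the only step that genuinely goes beyond the previous theorem, is the distinctness of columns in the forward direction: after sign normalization, two equal $\pm 1$ columns would push the corresponding CRV coordinate to $2$, violating the shape demanded by the conclusion. I would resolve this by invoking the standard combinatorial convention that a partial Hadamard matrix has pairwise distinct columns, or equivalently by sharpening the definition given in the paper's introduction. The reverse direction is free of this issue, because a CRV coordinate equal to $1$ already forces exactly one occurrence with unit weight; everything else is a direct transcription of Theorem \ref{thm_ortho}.
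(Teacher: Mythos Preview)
The paper supplies no proof for this corollary; it is listed as an immediate consequence of Theorem \ref{thm_ortho} and the preceding theorem, and your approach is precisely that intended one.

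One correction to your reverse direction: the assertion that $\sum_j p_j^2 = 1$ with each $p_j > 0$ forces a single occurrence with $p_j = 1$ is false (two occurrences with $p = 1/\sqrt{2}$ already give $v_i = 1$; compare the Remark following the preceding theorem). You do not need this step, because the corollary's standing hypothesis already stipulates that $M$ has $\pm 1$ entries. Once $M$ is also $m$-Hadamardesque, every $p_j$ equals $1$ automatically, and Theorem \ref{thm_ortho} alone delivers the pairwise orthogonality of the rows; the CRV shape condition plays no role in that direction. Your diagnosis of the forward direction---that column distinctness is needed but not guaranteed by the paper's definition of a partial Hadamard matrix---is accurate and reflects a genuine looseness in the statement rather than a defect in your argument.
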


\begin{corollary}
    Given two m-Hadamardesque matricies $M$ and $M'$ with Column Representation 
    Vector $v$ and $v'$ respectively. $M$ and $M'$ have the same pairwise row 
    dot product iff $v-v'$ is in the span of $RC_m$.
\end{corollary}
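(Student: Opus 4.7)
The plan is to reduce the statement to a linear-algebraic observation about $R_m$ and $RC_m$, in complete analogy with the proof of Theorem \ref{thm_ortho}. I will exploit the fact that, by Proposition \ref{prop5}, every pairwise row dot product of an $m$-Hadamardesque matrix is recorded as the inner product of its Column Representation Vector with a specific row of $CT_m$.

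First, I would fix $m \geq 2$ and the pair of $m$-Hadamardesque matrices $M, M'$ with Column Representation Vectors $v, v'$. For each pair of indices $1 \leq r_i < r_j \leq m$, Proposition \ref{prop5} gives
\[
\langle \text{row}_{r_i}(M), \text{row}_{r_j}(M)\rangle = \langle v, s_{ij}\rangle, \qquad \langle \text{row}_{r_i}(M'), \text{row}_{r_j}(M')\rangle = \langle v', s_{ij}\rangle,
\]
where $s_{ij}$ denotes the $\left(\tfrac{(r_j-1)(r_j-2)}{2}+r_i\right)$-th row of $CT_m$. As $(r_i, r_j)$ ranges over all pairs with $r_i < r_j$, the vectors $s_{ij}$ range exactly over $R_m$ (the full set of rows of $CT_m$, by definition).

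Next, I would observe that $M$ and $M'$ have identical pairwise row dot products if and only if $\langle v, s\rangle = \langle v', s\rangle$ for every $s \in R_m$, i.e.\ $\langle v - v', s\rangle = 0$ for all $s \in R_m$. This is precisely the statement that $v - v' \in R_m^{\perp}$.

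The final step uses the remark (following the definition of $R_m$ and $RC_m$) that $R_m \cup RC_m$ is an orthogonal basis of $\RR^{2^{m-1}}$ and that $R_m$ and $RC_m$ are mutually orthogonal. Hence $R_m^{\perp} = \mathrm{span}(RC_m)$, which gives the equivalence claimed. There is no genuine obstacle here; the result is an immediate corollary of Proposition \ref{prop5} together with the orthogonal decomposition, and the proof is really just a careful bookkeeping of the correspondence between pairs of rows and elements of $R_m$.
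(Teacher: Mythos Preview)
Your proposal is correct and follows exactly the intended argument: the paper states this as an immediate corollary (with no written proof), and your derivation via Proposition \ref{prop5} together with the orthogonal decomposition $R_m^{\perp}=\mathrm{span}(RC_m)$ is precisely the reasoning that makes it a corollary of Theorem \ref{thm_ortho}.
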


We will continue illustrate the power of our approach by proving a few more theorems.
\begin{theorem}\label{thm1}
    For every $a \in \mathbb{R}^{\frac{m(m-1)}{2}}$ there exists a $m$-Hadamardesque matrix $M$ such that $\langle r_i, r_j \rangle = 
    a_{\frac{(r_j-1)(r_j-2)}{2} + r_i}$ where $r_i$ and $r_j$ are the $i^{th}$ 
    and $j^{th}$ rows of $M$ and $j > i$. 
\end{theorem}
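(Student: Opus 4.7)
The plan is to realize the prescribed dot products via the Column Representation Vector machinery. By Proposition \ref{prop5}, it suffices to exhibit a vector $v \in \mathbb{R}^{2^{m-1}}$ with non-negative coordinates (so that it is a valid Column Representation Vector of some $m$-Hadamardesque matrix) satisfying $\langle v, \rho_k \rangle = a_k$ for each $k = 1, \ldots, m(m-1)/2$, where $\rho_k$ denotes the $k$-th row of $CT_m$ in the ordering of Lemma \ref{lem1}.

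First I would establish existence of \emph{some} (possibly sign-indefinite) preimage. Since the rows of $H_{2^{m-1}}$ form an orthogonal basis of $\mathbb{R}^{2^{m-1}}$ and $R_m$ is a subset of them, the linear map $v \mapsto (\langle v, \rho_k \rangle)_k$ is surjective onto $\mathbb{R}^{m(m-1)/2}$. An explicit preimage $v_0$ of $a$ can be written as a linear combination $v_0 = \sum_k \alpha_k \rho_k$ with $\alpha_k = a_k / \|\rho_k\|^2$, using orthogonality of the rows of $H_{2^{m-1}}$.

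The main obstacle is upgrading $v_0$ to a non-negative vector without disturbing its inner products with the $\rho_k$. The key observation I would use is that the all-ones vector $\mathbf{1} \in \mathbb{R}^{2^{m-1}}$ lies in $\mathrm{span}(RC_m)$: by Proposition \ref{prop3} every row of $CT_m$ is the coordinate-wise product of two distinct rows of $T_m$, but any two distinct rows of $H_{2^{m-1}}$ are orthogonal, so their coordinate-wise product is a $\pm 1$-vector summing to $0$ and therefore cannot equal $\mathbf{1}$. Hence $\mathbf{1}$, which is the first row of $H_{2^{m-1}}$, does not lie in $R_m$ and so lies in $RC_m$. Consequently $v_0 + t \mathbf{1}$ has the same inner products against each $\rho_k$ as $v_0$ does, and taking $t$ larger than $-\min_i (v_0)_i$ produces a non-negative vector $v$ still satisfying $\langle v, \rho_k \rangle = a_k$.

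Finally, I would build $M$ directly from $v$: for each index $i$ with $v_i > 0$ include the single column $\sqrt{v_i}\, c_i$ of $M$, where $c_i$ is the $i$-th column of $T_m$, and omit the column for indices with $v_i = 0$. By construction $M$ is $m$-Hadamardesque with Column Representation Vector exactly $v$, and Proposition \ref{prop5} then yields the required pairwise dot products of its rows in the prescribed order. The entire argument hinges on the observation $\mathbf{1} \in \mathrm{span}(RC_m)$; once that is in hand, the rest is routine linear algebra plus a direct construction.
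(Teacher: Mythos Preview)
Your proof is correct and follows essentially the same route as the paper: define $v_0 = \frac{1}{2^{m-1}}\sum_k a_k \rho_k$ (the paper's $v'$), shift by a multiple of $\mathbf{1}$ to force non-negativity, and invoke Proposition~\ref{prop5}. You even supply a justification for $\mathbf{1}\perp R_m$ (via Lemma~\ref{lem1} rather than Proposition~\ref{prop3}, strictly speaking) and an explicit construction of $M$ from $v$, both of which the paper leaves implicit.
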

\begin{proof} If $a=0$, then we can let $M=T_m$. Now consider $a\not=0$. To construct the matrix $M$, it is enough to contruct its Column Representation 
    Vector $v$. Define
$$v'= \frac{1}{2^{m-1}}\sum_{i=1}^{\frac{m(m-1)}
    {2}}a_{i}R_{i},$$ where $R_i$ is the $i^{th}$ row of $CT_m$. Now let $v=v'+s\bf{1}$, where $\bf 1$ is the all 1's vector in $RC_m$ and $s$ is a number so that all coordinates of $v$ are non-negative. Let $M$ be a $m$-Hadamardesque matrix whose the Column Representation 
    Vector is $v$. Let $r_i, r_j$ be any two rows of $M$, $(i<j)$ and $R_{ij}$ be the $(\frac{(r_j-1)(r_j-2)}{2} + 
    r_i)^{th}$ row in $CT_m$. By Proposition \ref{prop5}, we have that 
\[\langle r_i, r_j \rangle = \langle v, R_{ij}\rangle = \langle v'+a{\bf 1}, R_{ij}\rangle=\langle v', R_{ij}\rangle = 
    s_{\frac{(r_j-1)(r_j-2)}{2} + r_i}.\] This comptetes the proof.
\end{proof}
\begin{remark} The matrix $M$ in Theorem \ref{thm1} is not unique since two different $m$-Hadamardesque matrices might have the same Column Representation 
    Vector.
\end{remark}

\begin{theorem}\label{thm2}
 If $a$ in Theorem \ref{thm1} is also in $\mathbb{Q}^{\frac{m(m-1)}{2}}$, then the m-Hadamardesque matrix $M$ can be taken to have entries only of the form $\pm r$ for some $r \in \mathbb{Q}$. 
\end{theorem}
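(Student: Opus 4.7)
The plan is to re-run the construction in the proof of Theorem \ref{thm1} and observe that, when $a$ is rational, the Column Representation Vector can be chosen with rational coordinates; the only remaining task is to realize that rational vector via columns that all share a single rational scaling factor.

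First, I would take $v' = \tfrac{1}{2^{m-1}}\sum_{i=1}^{m(m-1)/2}a_i R_i$ exactly as in Theorem \ref{thm1}. Since every entry of every row $R_i$ of $CT_m$ is $\pm 1$ and each $a_i \in \mathbb{Q}$, the vector $v'$ lies in $\mathbb{Q}^{2^{m-1}}$. Next choose any rational $s \geq \max_i |v'_i|$ so that $v := v' + s\mathbf{1}$ has non-negative rational coordinates. The argument given in Theorem \ref{thm1} (using $\mathbf{1} \in RC_m$ and Proposition \ref{prop5}) shows that every $m$-Hadamardesque matrix with Column Representation Vector $v$ has pairwise row dot products equal to $a$, so it suffices to produce such a matrix with all entries of common modulus $r \in \mathbb{Q}$.

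To do this, write the coordinates of $v$ over a common positive integer denominator: $v_i = p_i/q$ with $p_i \in \mathbb{Z}_{\geq 0}$ and $q \in \mathbb{Z}^+$. Set $r = 1/q$ and, for each $i \in \{1,\dots,2^{m-1}\}$, include in $M$ exactly $p_i q$ copies of the column $r c_i$, where $c_i$ is the $i^{\mathrm{th}}$ column of $T_m$ (and no copy when $p_i = 0$). The resulting matrix is $m$-Hadamardesque by construction, every entry of $M$ equals $\pm r = \pm 1/q \in \mathbb{Q}$, and its Column Representation Vector has $i^{\mathrm{th}}$ coordinate
\[ p_i q \cdot r^2 \;=\; p_i q \cdot \frac{1}{q^2} \;=\; \frac{p_i}{q} \;=\; v_i, \]
so it equals $v$. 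Combined with the previous paragraph, this gives the desired matrix.

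There is no real obstacle; the only subtle point is that we need $r$ itself, rather than merely $r^2$, to be rational. That is exactly why taking $r = 1/q$ for the common denominator $q$ of the coordinates of $v$ is the right choice: it automatically makes $v_i/r^2 = p_i q$ a non-negative integer for every $i$, which is precisely the feasibility condition for realizing $v$ by a multiset of columns each of modulus $r$.
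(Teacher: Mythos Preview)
Your argument is correct and follows the same strategy as the paper: observe that $v'$ is rational, shift by a rational multiple of $\mathbf{1}$ to get a non-negative rational $v$, and then realize $v$ by an appropriate integer number of rescaled copies of each $c_i$. The only difference is organizational: the paper first builds a rational-entry matrix using a separate denominator $q_i$ for each coordinate $v_i=p_i/q_i$ and then, in a second pass, replaces each column by $(d/q_i)^2$ copies scaled to the common denominator $d=\operatorname{lcm}(q_i)$; you instead clear to a common denominator $q$ at the outset and go directly to $p_iq$ copies of $(1/q)c_i$, which collapses the paper's two steps into one.
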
 
\begin{proof} We first show that if $a\in \mathbb{Q}^{\frac{m(m-1)}{2}}$, then the $m$-Hadamardesque matrix $M$ can be taken to have entries in $\mathbb{Q}$. Let $v'$ be the vector constructed as in the proof of Theorem \ref{thm1}. Adding any 
    rational multiple of the all $1's$ vector to $v'$ so that its coordinates 
    are non-negative would give a vector $v$ with non-negative rational coordinates. To contruct the matrix $M$, for each $v_i \neq 0$, if $v_i = \frac{p}{q}$, we may simply take $pq$ copies of $\frac{1}{q}c_i$ where $c_i$ is the $i^{th}$ column of $T_m$.

Now we will construct the matrix $M'$ with entries of the form $\pm r$ for some $r\in \mathbb{Q}$. Let $M$ be the matrix we have just constructed above. 
    Then each column $c_i$ in $M$ is of the form $(\frac{1}{d_i}, \pm\frac{1}{d_i}, \ldots,
    \pm\frac{1}{d_i}) \in \mathbb{Q}^{m}$. Let $d$ be the least common multiple 
    of all the $d_i$'s. For each column $c_i$ in $M$, we let  $c_i'$ to 
    be $c_i$ but replacing the denominator $d_i$ with $d$. We then replace $c_i$ with 
    $(\frac{d}{d_i})^2$ copies of $c'_i$. The resulting matrix $M'$ only contains entries of the form $\pm\frac{1}{d}$, and it has the same pairwise dot product as the original matrix $M$.
\end{proof}

\begin{corollary}\label{cor1}
     If $a$ in Theorem \ref{thm1} is in $\mathbb{Q}^{\frac{m(m-1)}{2}}$, 
     then the $m$-Hadamardesque matrix $M$ can be taken to have entries only of 
     the form $\pm r$ for some $r \in 
     \mathbb{R} \setminus \mathbb{Q}$.
\end{corollary}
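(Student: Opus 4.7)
My plan is to take the rational-entry matrix supplied by Theorem~\ref{thm2} and apply a single rescaling/duplication step that introduces a factor of $\sqrt{2}$ into every entry while leaving the Column Representation Vector (and therefore, by Proposition~\ref{prop5}, the pairwise row dot products) unchanged.

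First I will invoke Theorem~\ref{thm2} to produce an $m$-Hadamardesque matrix $M$ realizing the target vector $a$ whose entries are all of the form $\pm \tfrac{1}{d}$ for some positive integer $d$. Writing $M$ in terms of the columns of $T_m$, suppose it contains $k_i$ copies of the column $\tfrac{1}{d}c_i$, where $c_i$ is the $i$-th column of $T_m$; then the Column Representation Vector of $M$ has coordinates $v_i = k_i/d^2$.

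Next I will construct $M'$ by replacing each column $\tfrac{1}{d}c_i$ of $M$ with two copies of the rescaled column $\tfrac{1}{d\sqrt{2}}c_i$. Every entry of $M'$ will then have the form $\pm \tfrac{1}{d\sqrt{2}} = \pm \tfrac{\sqrt{2}}{2d}$, which lies in $\mathbb{R}\setminus\mathbb{Q}$ because $\sqrt{2}$ is irrational and $d$ is a nonzero integer; and since $\tfrac{1}{d\sqrt{2}}>0$, the matrix $M'$ remains $m$-Hadamardesque in the sense of Definition~\ref{def2}. A direct check confirms that the Column Representation Vector is preserved: the $i$-th coordinate of the new vector equals $2k_i\cdot\bigl(\tfrac{1}{d\sqrt{2}}\bigr)^2 = 2k_i\cdot\tfrac{1}{2d^2} = k_i/d^2 = v_i$, so Proposition~\ref{prop5} implies that the pairwise row dot products of $M'$ still equal $a$. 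I do not anticipate any real obstacle here; the only point worth highlighting is that doubling the multiplicity of each column exactly compensates for the factor $(\tfrac{1}{\sqrt{2}})^2 = \tfrac12$ introduced by the irrational rescaling, which is precisely what lets us upgrade the rational-entry construction of Theorem~\ref{thm2} to an irrational-entry construction without disturbing the target dot-product vector.
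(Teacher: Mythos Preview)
Your proof is correct and follows essentially the same approach as the paper: the paper's argument is simply to take the matrix from Theorem~\ref{thm2}, duplicate every column, and divide each column by $\sqrt{2}$. Your write-up is more detailed in that you explicitly verify via the Column Representation Vector and Proposition~\ref{prop5} that the pairwise row dot products are preserved, but the underlying construction is identical.
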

\begin{proof}
    We simply take the matrix in Theorem \ref{thm2} and take two copies of each 
    column. Then we divide each column by $\sqrt{2}$.
\end{proof}

\begin{remark}  Theorem \ref{thm2} and Corollary \ref{cor1} are not true for some $a 
\notin \mathbb{Q}^{\frac{m(m-1)}{2}}$. For instance, let $a$ be the vector containing both non-zero rational and irrational coordinates. Let $M$ be any matrix with entries of the form $\pm r$ for some $r\in \mathbb{Q}$ (or $r\in \mathbb{R}\setminus \mathbb{Q})$. The conclusion folows by observing that the dot product of any two rows of $M$ is of the form $sr^2$ for some integer $s$.
\end{remark}

\end{document}